\newtheorem{lem}{\bf Lemma}
\newcommand{\input{fulltitle}}{\input{fulltitle}}
\newcommand{\input{shorttitle}}{\input{shorttitle}}
\definecolor{titlepagegray}{gray}{0.8}
\begin{document}

\title[Exponential functionals]{On distributions of exponential functionals of the processes with independent increments}

\maketitle

\begin{center}
{\large  L. Vostrikova, LAREMA, D\'epartement de
Math\'ematiques, Universit\'e d'Angers, 2, Bd Lavoisier  49045,
\sc Angers Cedex 01}
\end{center} 
\vspace{0.2in}

\begin{abstract}
The aim of this paper is  to study the laws of the exponential functionals of  the processes $X$ with independent increments, namely
$$I_t= \int _0^t\exp(-X_s)ds, \,\, t\geq 0,$$ 
and also
$$I_{\infty}= \int _0^{\infty}\exp(-X_s)ds.$$
 Under suitable conditions we derive the integro-differential equations for the density of $I_t$ and $I_{\infty}$. We give sufficient conditions for the existence of smooth density of the laws of these functionals. In the  particular case of Levy processes  these equations can be simplified and, in a number of cases, solved explicitly. 
\end{abstract}
\noindent {\small KEY WORDS }: process with independent increments, exponential functional, density, Kolmogorov-type equation.\\\\
\noindent MSC 2010 subject classifications: 60G51, 91G80
\begin{section}{Introduction}\label{s1}
\par This study was inspired by the questions arising in mathematical finance, namely by the questions  related with the perpetuities containing the liabilities, perpetuities subjected to the influence of the economical factors (see, for example, \cite{KR}), and also  with the price of Asian options and similar questions (see, for instance, \cite{CJY},  \cite{V}, and references there). The study of exponential functionals are also important in insurance, since the insurace companies invest the money on risky assets. Then the distributions of these functionals appear very naturally in ruin problem (see for example \cite{A}, \cite{P}, \cite{KP1} and references there). 
\par In mathematical finance the non-homogeneous PII models are more realistic, since price processes are not usually homogeneous processes. For this reason, for example, several authors used for the modelling of log price  a process $X$ such that
$$X_t= \int_0^t g_{s-} dL_s$$
where $L$ is Levy process and $g$ is independent from $L$ c\'{a}dl\`{a}g random process for which the integral is well defined. In this case, the conditional distribution of the process $X$ given $\sigma$~-algebra generated by $g$,
is a PII. Another important example is a Levy process $L$ time changed by an independent increasing process $(\tau_t)_{t\geq 0}$ (cf.\cite{CW}), i.e. $$X_t= L_{\tau_t}.$$ Again, conditionally to the process $\tau$, the process $X$ is PII.
\par In \cite{SV} we obtained the recurrent formulas for Mellin transform and we use these formulas to calculate the moments of exponential functionals of the processes with independent increments. In this paper we obtain the equations for the densities, when they exist, of the laws  of exponential functionals $I_t$ and $I_{\infty}$ of PII processes.
\par Exponential functionals for Levy processes was studied in a big number of articles, the main part of them was related to the study of $I_{\infty}$. The asymptotic behaviour of exponential functionals $I_{\infty}$ was studied in \cite{CPY}, in particular for $\alpha$-stable Levy processes. The authors also give  an integro-differential equation for the density of the law of exponential functionals, when this density w.r.t. Lebesgue measure exists. The questions related with the characterisation of the law of exponential functionals by the moments was studied in \cite{BY}.
\par In more general setting, related to the Lévy case, the following functional
\begin{equation}\label{0} 
 \int _0^{\infty}\exp(-X_{s-})d\eta _s 
\end{equation}
where $X=(X_t)_{t\geq 0}$ and $\eta= (\eta_t)_{t\geq 0}$ are independent Lévy processes, was intensively studied. The conditions for finiteness of the integral \eqref{0} was obtained in \cite{EM}. The continuity properties of the law of this integral was studied in \cite{BLM}, where the authors give the condition for absence of the atoms and also the conditions for absolute continuity of the laws of integral functionals w.r.t. the Lebesgue measure. Under the assumptions  about the existence of the smooth density of these functionals, the equations for the density  are given  in \cite{Be}, \cite{BeL}, \cite{KPS}. 
\par In the papers \cite{PS} and \cite{PRV}, again for Levy process, the  properties of the exponential functionals $I_{\tau_q}$ killed at the independent exponential time $\tau_q$    of the parameter $q>0$, was investigated. In the article \cite{PRV} the authors studied the existence of the density of the law of $I_{\tau_q}$, they give an integral equation for the density and the asymptotics of the law of $I_{\infty}$ at zero and at infinity, when $X$ is a positive subordinator.
The results given in \cite{PS}  involve analytic Wiener-Hopf factorisation, Bernstein functions and contain the conditions for regularity, semi-explicite expression and asymptotics for the distribution function of  $I_{\tau_q}$. Despite numerous studies, the distribution properties of  $I_t$ and $I_{\infty}$ are known only in a limited number of cases. When $X$ is Brownian motion with drift, the distributions of  $I_t$ and $I_{\infty}$ was studied in \cite{D} and  for a big number of  specific processes $X$ and $\eta$, like Brownian motion with drift and compound Poisson process, the distributions of  $I_{\infty}$ was given in \cite{GP}. 
\par Exponential functionals for diffusions stoped at first hitting time was studied in \cite{SW}, where authors  derive  Laplace transform of the functionals  and then, to find their laws,  perform  numerical inversion of Laplace transform. The relations between  hitting times and occupation times for exponential functionals was considered in \cite{SY}, where the versions of identities in law such as Dufresne's identity, Ciesielski-Taylor's identity, Biane's identity, LeGall's identity was given.   
 \par In this article we consider a real valued process $X=(X_t)_{t\geq 0}$ with independent increments  and $X_0=0$, which is a semi-martingale with respect to its natural filtration. We denote by $(B,C,\nu)$ a semi-martingale triplet of this process, which can be chosen deterministic (see \cite{JSh}, Ch. II, p.106). We suppose that
$B=(B_t)_{t\geq 0}$, $C=(C_t)_{t\geq 0}$ and $\nu$ are absolutely continuous with respect to Lebesgue measure, i.e. 
\begin{equation}\label{abscon}
B_t= \int_0^t b_s\,ds, \,\,\,C_t= \int_0^t c_s\,ds, \,\,\, \nu(dt,dx)= dt K_t(dx)
\end{equation}
with c\`{a}dl\`{a}g functions $b = (b_s)_{s\geq 0},c = (c_s)_{s\geq 0},K = (K_s(A))_{s\geq 0, A\in \mathcal{B}(\mathbb{R})}$.
We assume that the compensator of the measure of jumps $\nu$ verify the usual relation: for each $t\in \mathbb{R}^+$
\begin{equation}\label{rt1}
\int_0^t\int_{\mathbb{R}} (x^2\wedge 1)K_s(dx)\,ds  < \infty.
\end{equation}
 For the the main result we will  suppose an additional technical condition that
\begin{equation}\label{rt11}
\int_0^t\int_{|x|>1} e^{|x|}\,K_s(dx)\,ds  < \infty.
\end{equation}
The last condition  implies  that ${\bf E}(|X_t|)<\infty$ for $t>0$ (cf \cite{Sa}, Th. 25.3, p.159) so that  the truncation of the jumps is  no more necessary.
\par We recall that the characteristic function  of $X_t$  $$\phi_t(\lambda)={\bf E}\exp(i\lambda X_t)$$  is defined by the following expression: for $\lambda\in \mathbb{R}$
$$\phi_t(\lambda )= \exp\{i\lambda B_t - \frac{1}{2}\lambda ^2C_t + \int_0^t\int_{\mathbb{R}}(e^{i\lambda x}-1-i\lambda x )\,K_s(dx)\,ds\}.$$
We recall also  that $X$ is a semi-martingale if and only if for all $\lambda\in \mathbb{R}$ the characteristic function  of $X_t$ is of finite variation in $t$ on finite intervals (cf. \cite{JSh}, Ch.2, Th. 4.14, p.106 ). Moreover, the process $X$ always can be written as a sum of a semi-martingale and a deterministic function which is not necessarily of finite variation on finite intervals.
\par The article is organized as follows. The part 2. is devoted to the Kolmogorov type equation  for the law of $I_t$. As known, the exponential functional $(I_t)_{t>0}$  is not a Markov process with respect to the filtration generated by the process $X$. It is continuous increasing process, which prevent the use  of the  stochastic calculus in an efficient way. For these reasons we  fix $t$ and we introduce  a family of stochastic processes $V^{(t)}=(V^{(t)}_s)_{0\leq s\leq t}$ indexed by $t$ and such that $I_t=V^{(t)}_t$ ($P$-a.s.) (see Lemma 1). The construction of such processes is made via the time reversion of the process $X$ at the fixed time $t$,  and this gives us the process $Y^{(t)}=(Y^{(t)}_s)_{0\leq s\leq t}$. We show that if $X$ is PII then $Y^{(t)}$ is also PII, and if $X$ is Levy process, then $Y^{(t)}$ is so (see Lemma 2). We prove then that $V^{(t)}$ is a Markov process
with respect to the natural filtration of the process $Y^{(t)}$ (see Lemma 3), we find the generator and we give the Kolmogorov-type equations for $V^{(t)}$(see Theorem 1). Supposing the existence of the smooth density of the law of the process $V^{(t)}$ we derive the integro-differential equation for the density of the laws of $V^{(t)}_s, 0<s<t$. The density of the law of $V^{(t)}_t$ can be obtained just by integration of the right-hand side of the equation for the density of the law of $V^{(t)}_s$ in $s$ on the interval $]0,t[$ since the laws of $V^{(t)}_t$ and $V^{(t)}_{t-}$ coincide.
The last fact follows from the absence of the predictable jumps of the process $X$, and, hence, the ones of $Y^{(t)}$.
\par In the part 3. we consider the question of existence of the smooth density of the process $V^{(t)}$. The question of the existence of the density of the law of $V^{(t)}_s$, $0\leq s \leq t,$ of the class
$C^{1,2}(]0,t[, \mathbb{R}^{+,*})$ is rather difficult question, which was open question in all cited papers
on exponential functionals. We remark that in the case of Levy processes the laws of $V^{(t)}_t$ and $I_t$ coincide. We will give in Proposition 2  sufficient conditions for the existence of the density of the class
$C^{\infty}(]0,t[,\mathbb{R}^{+,*})$ when $X$ is Levy process. For non-homogeneous diffusions we give a partial answer on this question in Corollary~1.

\par The part 4 is devoted to Levy processes. When $X$ is Levy process, the equations for the density of $I_t$ can be simplified due to the homogeneity (see Proposition 1). We present also the equations for the distribution functions of the laws of $I_t$ and $I_{\infty}$, since these versions have the explicit boundary conditions, which gives the advantages from numerical point of view (cf. Corollary 2). In Corollary 3 we consider very known Brownian case. In  Corollary 4 we  give the equations for the case of Levy processes with integrable jumps, and in Corollary 5, we consider the case of exponential jumps.
\end{section}
\begin{section}{Kolmogorov type equation for the density of the law of $I_t$}\label{s1}
We introduce, for fixed $t>0$,  a time reversal process $Y=(Y_s)_{0\leq s\leq t}$ with $$Y_s= X_t-X_{(t-s)-}.$$ Of course, this process depends on the parameter $t$, but we will omit it for simplicity of the notations.
\par For convenience of the readers we present here  Lemma 1 and Lemma 2 proved in \cite{SV}.
First result establish the relation between $I_t$ and the process $Y=(Y_s)_{0\leq s\leq t}$.
\begin{lem}\label{l1}
For $t>0$ (${\bf P}$-a.s.) 
$$ I_t= e^{-Y_t}\int _0^t e^{Y_s}ds. $$ 
\end{lem}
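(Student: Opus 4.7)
The plan is a direct computation: substitute the definition of $Y$ into the right-hand side and reduce it to the defining integral for $I_t$ via a change of variables.

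First I would unpack $Y_s = X_t - X_{(t-s)-}$ inside the exponential, giving $e^{Y_s} = e^{X_t}\,e^{-X_{(t-s)-}}$. Factoring the constant $e^{X_t}$ out of the integral yields
\begin{equation*}
\int_0^t e^{Y_s}\,ds \;=\; e^{X_t}\int_0^t e^{-X_{(t-s)-}}\,ds .
\end{equation*}
Next I would evaluate $Y_t = X_t - X_{0-} = X_t$ using the convention $X_{0-}=X_0=0$, so that the prefactor $e^{-Y_t} = e^{-X_t}$ cancels the $e^{X_t}$ above, leaving
\begin{equation*}
e^{-Y_t}\int_0^t e^{Y_s}\,ds \;=\; \int_0^t e^{-X_{(t-s)-}}\,ds .
\end{equation*}

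Then I would perform the deterministic change of variables $u = t-s$, which turns this into $\int_0^t e^{-X_{u-}}\,du$. The last step is to replace $X_{u-}$ by $X_u$ under the Lebesgue integral: since $X$ is càdlàg, the set $\{u\in[0,t] : X_{u-}\neq X_u\}$ is at most countable, hence of Lebesgue measure zero, so the two integrals agree pathwise. This yields $\int_0^t e^{-X_u}\,du = I_t$ ($\mathbf{P}$-a.s.).

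The entire argument is elementary; the only point that requires a moment of care is the final identification $\int_0^t e^{-X_{u-}}\,du = \int_0^t e^{-X_u}\,du$, which relies on the càdlàg property of $X$ and the fact that a càdlàg path has only countably many discontinuities. No hypothesis beyond $X_0 = 0$ and the càdlàg property is needed, so the statement holds in this generality.
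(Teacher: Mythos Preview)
Your proof is correct and follows essentially the same route as the paper: substitute the definition of $Y$, cancel the factor $e^{X_t}$, change variables $u=t-s$, and invoke the c\`adl\`ag property to pass from $X_{u-}$ to $X_u$ under the Lebesgue integral. The paper compresses the first two steps into the single identity $e^{-Y_t+Y_s}=e^{-X_{(t-s)-}+X_0}$, but the substance is identical.
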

\it Proof \rm \, We write, using the definition of the process $Y$ and the assumption that $X_0=0$:  
$$e^{-Y_t}\int _0^t e^{Y_s}ds =  \int _0^t e^{-Y_t +Y_s}ds=
\int _0^t e^{-X_{(t-s)_-}+X_0}ds= \int_0^t e^{-X_s}ds = I_t.$$
The last equality holds after time change noticing that the integration of left-hand version and right-hand version of some c\`{a}dl\`{a}g process w.r.t. Lebesgue measure gives the same result.
$\Box$

In the following lemma we claim  that $Y$ is PII and we  precise its semi-martingale triplet. For that we introduce the functions
$\bar{b}= (\bar{b}_u)_{0\leq u \leq t}$, $\bar{c}=(\bar{c}_u)_{0\leq u \leq t}$ and $\bar{K}=(\bar{K}_u)_{0\leq u \leq t}$ putting 
\begin{equation}\label{car1}
\bar{b}_u={\bf 1}_{\{t\}}(u)(b_t-b_0)+ b_{t-u},
\end{equation}
\begin{equation}\label{car2}
\bar{c}_u= {\bf 1}_{\{t\}}(u)(c_t-c_0)+c_{t-u},
\end{equation}
\begin{equation}\label{car3}
\bar{K}_u(x)= {\bf 1}_{\{t\}}(u)(K_t(x)-K_0(x))+ K_{t-u}(x)
\end{equation}
where ${\bf 1}_{\{t\}}$ is indicator function. It means, for instance for $\bar{b}=(\bar{b}_u)_{0\leq u \leq t}$ that
$$\bar{b}_u= \left\{ \begin{array}{lll}
b_{t-u}&\mbox{if}&0\leq u<t,\\
b_t&\mbox{if}&u=t.
\end{array}\right.$$
So,  the function $\bar{b}$ can have a discontinuity at $t$, since in general $b_0\neq~b_t$.
\begin{lem}\label{l2}(cf. \cite{SV})
The process $Y$ is a process with independent increments, it is a semi-martingale with respect to its natural filtration, and its semi-martingale triplet  $(\bar{B}, \bar{C}, \bar{\nu})$ is given by: for $0\leq s \leq t$,
\begin{equation}\label{cat}
\bar{B_s}= \int_0^s \bar{b}_u du, \,\,\bar{C_s}= \int_0^s \bar{c}_u du,\,\,
\bar{\nu}(du,dx) = \bar{K}_u(dx)\,du
\end{equation}

\end{lem}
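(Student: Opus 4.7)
The plan is to verify by direct computation that $Y$ has independent increments and to read off the triplet $(\bar B,\bar C,\bar\nu)$ from the explicit form of the characteristic function $\phi_t(\lambda)$ of $X_t$ recalled in the introduction. For any partition $0\le s_1<s_2<\cdots<s_n\le t$, the increments
$$Y_{s_{k+1}}-Y_{s_k}=X_{(t-s_k)-}-X_{(t-s_{k+1})-}$$
are measurable with respect to the increments of $X$ over the disjoint intervals $[t-s_{k+1},t-s_k]$, so independence of the $Y$-increments follows from the PII property of $X$. Observe that under (\ref{abscon}) the map $r\mapsto\phi_r(\lambda)$ is continuous, whence $X$ has no fixed times of discontinuity; in particular $Y_0=X_t-X_{t-}=0$ almost surely.

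To identify the triplet, I would use the factorisation
$${\bf E}\bigl[e^{i\lambda Y_s}\bigr]=\frac{\phi_t(\lambda)}{\phi_{(t-s)-}(\lambda)}=\frac{\phi_t(\lambda)}{\phi_{t-s}(\lambda)},$$
insert the closed-form expression for $\phi_\cdot(\lambda)$, and perform the change of variable $u=t-r$ in each of the three integrals that appear in the resulting log-characteristic function. This produces
\begin{align*}
\log{\bf E}\bigl[e^{i\lambda Y_s}\bigr] &= i\lambda\int_0^s b_{t-u}\,du-\tfrac12\lambda^2\int_0^s c_{t-u}\,du \\
& \quad +\int_0^s\!\!\int_{\mathbb{R}}(e^{i\lambda x}-1-i\lambda x)\,K_{t-u}(dx)\,du,
\end{align*}
which is the characteristic exponent of a PII with the triplet specified by (\ref{cat}); the pointwise indicator corrections at $u=t$ in (\ref{car1})--(\ref{car3}) are irrelevant for these Lebesgue integrals.

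The semimartingale property then follows at once from the criterion \cite{JSh}, Ch.~II, Th.~4.14: since $\bar B,\bar C,\bar\nu$ are absolutely continuous in $s$, the map $s\mapsto\log{\bf E}[e^{i\lambda Y_s}]$ is of locally finite variation. The main subtlety I foresee is the endpoint bookkeeping, namely justifying that $\phi_{(t-s)-}=\phi_{t-s}$ (which uses the absence of fixed times of discontinuity) and verifying that the indicator adjustments at $u=t$ in the definitions of $\bar b,\bar c,\bar K$ give the correct c\`adl\`ag representation of the characteristics of $Y$ on the closed interval $[0,t]$ (needed because in general $b_0\ne b_t$, and similarly for $c$ and $K$). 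Neither point is deep, but both require unwrapping the definitions carefully.
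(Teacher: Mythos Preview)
The paper does not supply its own proof of this lemma; it simply cites \cite{SV} and states the result. There is therefore nothing in the present paper to compare your argument against. That said, your approach is correct and is the natural one: you map the $Y$-increments back to $X$-increments over disjoint intervals to get the PII property, compute ${\bf E}[e^{i\lambda Y_s}]=\phi_t(\lambda)/\phi_{t-s}(\lambda)$, and read off the triplet after the substitution $u=t-r$. The one point requiring care---the replacement of $X_{(t-s)-}$ by $X_{t-s}$ and the endpoint bookkeeping in (\ref{car1})--(\ref{car3})---you correctly identify and resolve via the absence of fixed times of discontinuity under (\ref{abscon}). This is almost certainly the argument given in \cite{SV}.
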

To obtain an integro-differential equation for the density, we introduce two important processes  related with the process $Y$, namely the process $V=(V_s)_{0\leq s\leq t}$ and $J=(J_s)_{0\leq s\leq t}$ with
\begin{equation}\label{vi}
 V_s= e^{-Y_s}J_s,\hspace{1cm} J_s= \int_0^s e^{Y_u}du.
\end{equation}
We underline that the both  processes depend of the parameter $t$, since it is so for the process $Y$. 
\par We notice that according to Lemma 1, $I_t=V_t$, and then they have the same laws. As we will see, the process $V=(V_s)_{0\leq s\leq t}$ is a Markov process with respect to the natural filtration $\mathbb{F}^Y= (\mathcal{F}_s^Y)_{0\leq s \leq t}$ of the process $Y$ and this fact will help us very much to find the equation for the density of the law of $I_t$.
\begin{lem}The process $V=(V_s)_{0\leq s\leq t}$ is a Markov process with respect to the natural filtration $\mathbb{F}^Y= (\mathcal{F}_s^Y)_{0\leq s \leq t}$ of the process $Y$.
\end{lem}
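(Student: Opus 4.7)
The plan is to verify the Markov property directly by exhibiting, for every $0\le s\le s+h\le t$, a functional relation that expresses $V_{s+h}$ as a deterministic function of $V_s$ together with increments of $Y$ on the interval $[s,s+h]$, which are independent of $\mathcal{F}_s^Y$ thanks to Lemma 2. Since PII $Y$ has independent increments, once such a decomposition is in hand, the Markov property follows by the usual conditioning argument.

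Concretely, I would first split the integral defining $J$ at $s$ to get
\begin{equation*}
J_{s+h}=J_s+\int_s^{s+h}e^{Y_u}\,du,
\end{equation*}
and then multiply by $e^{-Y_{s+h}}$, pulling $e^{-Y_s}$ out of the first term and $e^{-Y_u}$ out of the integrand in the second. This yields the key identity
\begin{equation*}
V_{s+h}=e^{-(Y_{s+h}-Y_s)}\,V_s+\int_s^{s+h}e^{-(Y_{s+h}-Y_u)}\,du.
\end{equation*}
The right-hand side is a measurable functional $F\bigl(V_s,(Y_v-Y_s)_{v\in[s,s+h]}\bigr)$, with $V_s$ being $\mathcal{F}_s^Y$-measurable and the increment process $(Y_v-Y_s)_{v\in[s,s+h]}$ being independent of $\mathcal{F}_s^Y$ by Lemma 2.

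Then, for any bounded Borel function $g$, a standard application of Fubini (freezing the $\mathcal{F}_s^Y$-measurable argument) gives
\begin{equation*}
{\bf E}\bigl[g(V_{s+h})\,\big|\,\mathcal{F}_s^Y\bigr]={\bf E}\bigl[g\bigl(F(x,(Y_v-Y_s)_{v\in[s,s+h]})\bigr)\bigr]\Big|_{x=V_s},
\end{equation*}
which is a Borel function of $V_s$ alone. This is precisely the Markov property of $V$ with respect to $\mathbb{F}^Y$, and it may be extended from a single increment to arbitrary bounded $\mathcal{F}^Y_{[s+h,t]}$-measurable test functions in the usual way via a monotone class argument.

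There is no serious obstacle here; the only mildly delicate points are (i) making sure to use the \emph{right-hand} exponential $e^{-Y_{s+h}}$ so that the decomposition cleanly separates the pre-$s$ information (captured by $V_s$) from the independent future increments, and (ii) checking that the functional $F$ is jointly measurable so that the conditional-expectation formula above is legitimate; both are routine.
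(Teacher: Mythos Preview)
Your proof is correct and follows essentially the same approach as the paper: both derive the identity $V_{s+h}=e^{-(Y_{s+h}-Y_s)}\bigl[V_s+\int_s^{s+h}e^{Y_u-Y_s}\,du\bigr]$ (your version simply distributes the prefactor through the bracket) and then invoke the independent-increments property of $Y$ to freeze $V_s$ inside the conditional expectation. The paper's argument is slightly more terse, omitting your remarks on joint measurability and the monotone class extension, but the substance is identical.
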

\begin{proof} We write that for $h>0$
$$V_{s+h}= e^{-Y_{s+h}}\int_0^{s+h} e^{Y_u}du = e^{-(Y_{s+h}-Y_{s})}\,[V_s+\int_s^{s+h}e^{Y_u-Y_s} du]$$
Then for all measurable bounded functions $f$
$${\bf E}(f(V_{s+h})\,|\,\mathcal{F}^Y_s)= {\bf E}\left(f(e^{-(Y_{s+h}-Y_{s})}[V_s+\int_s^{s+h}e^{Y_u-Y_s} du])\,|\,\mathcal{F}^Y_s\right)=$$
$${\bf E}\left(f(e^{-(Y_{s+h}-Y_{s})}[x+\int_s^{s+h}e^{Y_u-Y_s} du])\right)_{|x=V_s}$$
since $Y$ is a process with independent increments. Hence, ${\bf E}(f(V_{s+h})\,|\,\mathcal{F}^Y_s)$ is a measurable function of $V_s$ and we conclude that $V$ is Markov process with respect to the filtration generated by $Y$.
\end{proof}
\par We define the set of functions
$$\mathcal{C}= \{ f\in \mathcal{C}^2_b\,|\,\sup_{y\in\mathbb{R}^+} |f'(y)y|<\infty , \,\,\sup_{y\in\mathbb{R}^+}| f''(y)y^2| < \infty\}$$
and such that $\,f(0)=f'(0)=0$.
For $0\leq s\leq t$ we put
\begin{equation}
\overline{a}_s = -\overline{b}_s + \frac{1}{2}\overline{c}_s+\int_{\mathbb{R}}(e^{-x}-1+x)\overline{K}_s(dx)
\end{equation}
We notice that the conditions \eqref{c2} and \eqref{c3} imply that ( $\lambda$-a.s.)
$$\int_{\mathbb{R}}\,|e^{-x}-1+x|\,\overline{K}_s(dx) < \infty,$$
so that, $\overline{a}_s$ is ( $\lambda$-a.s.) well-defined.
We introduce also for $f\in \mathcal{C}$ the generator  $(\mathcal{A}_s^V)_{0\leq s < t}$ of the process $V$ via
\begin{equation}\label{g}
\mathcal{A}_s^V(f)(y)=
\end{equation}
$$ (1+y\,\overline{a}_s)\,f'(y) + \frac{1}{2}\overline{c}_s\,f''(y) \,y^2 +\int _{\mathbb{R}} \left[f(ye^{-x}) - f(y) - f'(y) y(e^{-x}-1)\right]\overline{K}_s(dx) $$ 
\thm \label{t1} Let us suppose that the conditions \eqref{abscon},\eqref{rt1} and \eqref{rt11} are verified. Then the infinitesimal generator $(\mathcal{A}_s^V)_{0\leq s < t}$ of the Markov process  $V$  is defined by \eqref{g}.
In addition, for $0\leq s\leq t$ and $f\in\mathcal{C}$
\begin{equation}\label{eq11}
{\bf E}(f(V_s)) =  \int_0^s {\bf E}(\, \mathcal{A}^V_u(f)(V_u)\,) du
\end{equation}
where $\mathcal{A}_t^V = \lim_{s\rightarrow t-}\mathcal{A}_s^V$.
If for $0<s\leq t$ the density $p_s$ w.r.t. Lebesgue measure $\lambda$ of the law of  $V_s$ exists and belongs to the class $\mathcal C^{1,2}(]0,t[\times \mathbb{R}^{+,*})$, then $\lambda$-a.s.
\begin{equation} \label{density}
\frac{\partial}{\partial s}p_s(y) = \frac{1}{2} \bar{c}_s \frac{\partial ^2}{\partial y^2}(y^2\,p_s(y)) - \frac{\partial}{\partial y}((\bar{a}_s\,y+1)\, p_s(y)) +
\end{equation}
$$\int _{\mathbb{R}}\left[e^{x}p_s(ye^{x}) - p_s(y) + (e^{-x}-1) \frac{\partial}{\partial y}(y p_s(y))\right]\bar{K}_s(dx) 
$$
 and  the density $p_t$ of the law of $I_t$ verify:
\begin{equation} \label{density1}
p_t(y) = \int_0^t \left\{\frac{1}{2} \bar{c}_s \frac{\partial ^2}{\partial y^2}(y^2\,p_s(y)) - \frac{\partial}{\partial y}((\bar{a}_s\,y+1)\, p_s(y)) +\right.
\end{equation}
$$\left.\int _{\mathbb{R}}\left[e^{x}p_s(ye^{x}) - p_s(y) + (e^{-x}-1) \frac{\partial}{\partial y}(y p_s(y))\right]\bar{K}_s(dx)\right\}ds$$
\rem \rm The existence and the uniqueness of the solution of the integro-differential equation given in Theorem 1 follows from the possibility to identify the characteristics of the corresponding process  from the equation. Since the law of PII process, which is a semi-martingale, is uniquely defined by its characteristics, the solution of  such equation exists and is unique.\\

{\it Proof of Theorem \ref{t1}.\,}\rm  The proof of Theorem 1 will be divided in three parts : in the first part we prove a decomposition \eqref{eq2}, then using limit passage we prove \eqref{eq11},  and finally, in the third part we obtain the equations \eqref{density}, \eqref{density1} for the densities.
\par {\it 1)\,Proof of \eqref{eq2}.} For $f\in\mathcal{C}$ and $0\leq s \leq t$ we write the Ito formula:
\begin{equation}
\label{eq1}
f(V_s)=f(V_0) + \int_0^s f'(V_{u-}) dV_u + \frac{1}{2}\int_0^s f''(V_{u-})d<V^c>_u+
\end{equation}
$$\hspace{3cm}\int_0^s \int_{\mathbb{R}} \left(f(V_{u-}+x)- f(V_{u-}) - f'(V_{u-})x\right)\mu_V(du,dx),$$
where $\mu_V$ is the measure of jumps of the process $V$. From the definition of the process $V$ we can easily find  that
$$dV_s= ds + J_s\, d(\,e^{-Y_s}\,)$$
and that
\begin{equation}\label{eq22}
 dV^c_s = - e^{-Y_{s-}}J_{s}\, dY^c_s=-V_{s-}\,dY^c_s,
\end{equation}
$$d<V^c>_s= (e^{-Y_{s-}}J_{s})^2 \,d<Y^c>_s = V_{s-}^2 \,d<Y^c>_s,$$
$$ \Delta V_s= V_s-V_{s-}= e^ {-Y_{s-}}J_{s}(e^{-\Delta Y_s}-1)= V_{s-}(e^{-\Delta Y_s}-1).$$
At the same time, again by the Ito formula we get the following decomposition
\begin{equation}\label{eq3}
e^{-Y_s} =  e^{-Y_0} +A_s + N_s.
\end{equation}
In this decomposition the process  $(A_s)_{s\geq 0}$ is defined via 
\begin{equation}\label{dec1}
A_s= \int_0^s e^{-Y_{u-}}[-\bar{b}_u+\frac{1}{2}\bar{c}_u+ \int_{\mathbb{R}}(e^{-x}-1+x)\bar{K}_u(dx)]\, du
\end{equation}
and it is a process of locally bounded vatiation on bounded intervals. In fact, let us
introduce a sequence of stopping times: for $n\geq 1$
$$\tau _n= \inf\{ 0\leq s \leq t\,| \, e^{-Y_s}\geq n\}$$
with $\inf\{\emptyset\}=\infty$. We notice that this sequence of stopping times tends to $+\infty$ as $n\rightarrow \infty$. Then, since $e^{Y_{s-}}<n$ on the stochastic interval $[\!\![ 0,\tau_n[\!\![$, 
we get from \eqref{abscon}, \eqref{rt1} and \eqref{rt11} that
$$\mbox{Var}(A)_{s\wedge \tau _n}\leq n\int_0^t \left[ |\bar{b}_u| + \frac{1}{2}\bar{c}_u + \int_{\mathbb{R}}\,|e^{-x}-1+x|\,\overline{K}_u(dx)\right] du < \infty .$$
In \eqref{eq3} the process $N=(N_s)_{s\geq 0}$ is defined by
\begin{equation}\label{dec2}
\hspace{-5cm}N_s=- \int_0^s e^{-Y_{u-}}d\bar{M}_u+
\end{equation}
$$\hspace{+3cm}\int_0^s \int_{\mathbb{R}}e^{-Y_{u-}}(e^{-x}-1+x)(\mu _Y(du,dx)-\bar{K}_u(dx) du)$$
In the relation \eqref{dec2}, the process $\bar{M}$ is the martingale component of the semi-martingale decomposition of $Y$: $ Y_s= \bar{B}_s + \bar{M}_s$, and $\mu_Y$ is the measure of jumps  of the process $Y$. It should be noticed that since $Y$ is a process with independent increments and $\bar{B}$ is deterministic, $\bar{M}$ is a martingale (see \cite{ShCh}, Th. 58, p. 45) as well as its pure discontinuous part $\bar{M}^d$.
Then, the process  $(N_{s\wedge \tau _n})_{0\leq s \leq t}$ is a local martingale as a stochastic integral of a bounded function w.r.t. a martingale.
\par We put \eqref{eq22},\eqref{eq3},\eqref{dec1} and \eqref{dec2} into \eqref{eq1} to obtain a final decomposition for $f(V_s)$.
To present this final decomposition we put  for  $y\geq 0$ and $x\in \mathbb{R}$
$$F(y,x)=f(ye^{-x}) - f(y) - f'(y) y(e^{-x}-1) $$ and also
$$B^V_s= \int_0^s \left[f'(V_{u-})(1+\bar{a}_u\,V_{u-}) + \frac{1}{2} f''(V_{u-})V_{u-}^2\,\bar{c}_u +\int_{\mathbb{R}} F(V_{u-},x) \bar{K}_u(dx)\,\right]du$$
and 
$$N^V_s= \int_0^s f'(V_{u-})\,V_{u-}[-d\bar{M}_u + \int_{\mathbb{R}}(e^{-x}-1+x)(\mu _Y(du,dx)-\bar{K}_u(dx)\,du]+$$
$$\int_0^s\int_{\mathbb{R}}F(V_{u-},x)(\mu_Y(du,dx)- \bar{K}_u(dx)\,du).$$

Finally, we get a decomposition
\begin{equation}\label{eq2}
f(V_s)=f(V_0) + B^V_s + N^V_s
\end{equation}
In this decomposition $B^V$ is a process with locally bounded variation and $N^V$ is a local martingale. In fact, let us use the same sequence of stopping times $\tau _n$ as previously and let
 $$D= \sup_{y\in\mathbb{R}}\max (|f(y)|, |f'(y)|, |f'(y)y|, |f''(y)y^2|).$$ 
Then,
$$\mbox{Var}(B^V)_{s\wedge \tau _n}\leq \,D \int_0^t \left(1+ |\bar{b}_u| + \bar{c}_u + \int_{\mathbb{R}}\,|e^{-x}-1+x|\,\overline{K}_u(x)dx\right) du $$
$$ + \int_0^t\int_{\mathbb{R}} |F(V_{u-},x)| \bar{K}_u(dx)\,du$$
The first term of the r.h.s. is finite since the functions $(\bar{B}_s)_{0\leq s\leq t}$ and $(\bar{C}_s)_{0\leq s\leq t}$  have finite variation on finite intervals and since \eqref{rt11} holds. Now,
using the Taylor-Lagrange formula of the second order, we find that for $y>0$ and $|x|\leq 1$
$$|F(y,x)|= \frac{1}{2}|f''(y(1+\theta(e^{-x}-1)))|y^2(e^{-x}-1)^2\leq \frac{D(e^{-x}-1)^2}{2[1+\theta(e^{-x}-1)]^2}$$
where $0<\theta<1$. Since for $|x|<1$, $1+\theta(e^{-x}-1)\geq \frac{1}{e}$ and $|e^{-x}-1|\leq e |x|$, we find that $|F(V_{u-},x)|\leq \frac{1}{2} D e^4x^2$ . 
\par For $|x|>1$ we use Taylor-Lagrange formula of the first order to get
$$|F(y,x)|= |(f'(y(1+\theta(e^{-x}-1)))- f'(y)) y(e^{-x}-1)|\leq $$
$$D\,\left[|1+\theta(e^{-x}-1)|^{-1}+1\right]\,|e^{-x}-1|$$
Again, for $x>1$,
$1+\theta (e^{-x}-1)\geq e^{-x}$, and for $x<-1$, $1+\theta(e^{-x}-1)\geq 1$. Moreover, for $x>1$, $|e^{-x}-1|\leq 1$ and for $x<-1$, $|e^{-x}-1|\leq e^{-x}$. 
Finally, $$|F(y,x)|\leq C \left( e^{|x|}{\bf 1}_{\{|x|>1\}}+ x^2\,{\bf 1}_{\{|x|\leq 1\}}\right)$$ with some positive constant $C$. Then, the conditions \eqref{rt1} and \eqref{rt11} implies that
$$\int_0^t\int_{\mathbb{R}} |F(V_{u-},x)| \bar{K}_u(dx)\,du < \infty .$$
Using above results we see that $(N^V_{s\wedge\tau_n})_{0\leq s\leq t}$ is a local martingale  as an integral of a bounded function w.r.t. a martingale.
\par {\it 2) Proof of \eqref{eq11}}. Let $(\tau'_n)_{n\in\mathbb{N}}$ be the localising sequence for $N_V$ and $\bar{\tau_n}= \tau_n\wedge\tau'_n$. Let $s\in [0,t[$ and $\delta>0$ such that $s+\delta \leq t$. Then, from previous decomposition using the localisation we get:
$${\bf E}(f(V_{(s+\delta)\wedge\bar{\tau_n}}) - f(V_{s\wedge\bar{\tau_n}})\,|\, \mathcal{F}^Y_s) = {\bf E}(B^V_{(s+\delta)\wedge\bar{\tau_n}} - B^V_{s\wedge\bar{\tau_n}}\,|\, \mathcal{F}^Y_s)  $$
Since $f$ is bounded function and $\lim_{n\rightarrow \infty} \bar{\tau_n}= + \infty$, we can pass to the limit in the l.h.s. by the Lebesgue convergence theorem. The same can be done on the r.h.s. since the process $B^V= (B^V_s)_{0\leq s\leq t}$ is a process of bounded variation on bounded intervals, uniformly in s and n.
After taking a limit as $n\rightarrow +\infty$ we get that
$${\bf E}(f(V_{s+\delta}) - f(V_s)\,|\, \mathcal{F}^Y_s) = {\bf E}(B^V_{s+\delta} - B^V_s\,|\, \mathcal{F}^Y_s)  $$
Now, we write the expression for $B^V_{s+\delta} - B^V_s$:
$$\hspace{-4cm} B^V_{s+\delta} - B^V_s= \int_s^{s+\delta} [f'(V_{u-})(1+\bar{a}_u\,V_{u-}) + $$
$$\hspace{4cm}\frac{1}{2} f''(V_{u-})V_{u-}^2\,\bar{c}_u +\int_{\mathbb{R}} F(V_{u-},x) \bar{K}_u(dx)\,]du$$
We remark that
$$\lim_{\delta\rightarrow 0}\frac{B^V_{s+\delta} - B^V_s}{\delta}=f'(V_{s-})(1+\bar{a}_s\,V_{s-}) + \frac{1}{2} f''(V_{s-})V_{s-}^2\,\bar{c}_s +\int_{\mathbb{R}} F(V_{s-},x) \bar{K}_s(dx)$$
We show that the  quantities $\frac{B^V_{s+\delta} - B^V_s}{\delta}$ are uniformly bounded, for small $\delta >0$, by a constant.  In fact, we can write that
$$ \frac{\mid B^V_{s+\delta} - B^V_s \mid}{\delta}\leq \frac{C}{\delta}\int_s^{s+\delta}\left[ \left( 1+\bar{a}_u + \frac{1}{2}\bar{c}_u\right)  + \int_{\mathbb{R}}| F(V_{u-},x)| \bar{K}_u(dx)\right] du $$
Then, using the estimations for $|F(V_{s-},x)|$ obtained previously, and the fact that the sequences
$$\frac{1}{\delta}\int_s^{s+\delta} \bar{a}_u du,\, \frac{1}{\delta}\int_s^{s+\delta} \bar{c}_u du,\,
\frac{1}{\delta}\int_s^{s+\delta}\!\!\int_{\mathbb{R}}\left( x^2 I_{\{|x|\leq 1\}}+ e^{|x|} I_{\{|x|>1\}}\right) \bar{K}_u(dx) du$$ are uniformly bounded, for small  values of $\delta >0$, by a constant, we deduce that 
the quantities $\frac{\mid B^V_{s+\delta} - B^V_s \mid}{\delta}$ are uniformly bounded for small $\delta$ by a constant, too.
Under these conditions we can exchange
the limit and the conditional expectation and it gives us the expression for the generator of $V$ at $0\leq s <t$.
As a conclusion, we get that for $0\leq s <t$
\begin{equation}\label{difeq} 
\frac{d}{ds}{\bf E}(f(V_s))=  {\bf E}\mathcal{A}^V_s(f)(V_{s-})
\end{equation}
Let us prove that we can replace $V_{s-}$ by $V_s$ in the above expression. In fact, for $\lambda\in \mathbb{R}$
$${\bf E}(e^{i\lambda\ln(\frac{V_s}{V_{s-}})}) = {\bf E}(e^{-i\lambda \Delta Y_s})= \lim_{h\rightarrow 0+}
{\bf E}(e^{-i\lambda ( Y_{s+h} - Y_s)})=1$$
since the characteristics of $Y$ are continuous in time. Hence, $V_{s-}=V_s$ (P-a.s.) and they have the same laws.
\par Then after the replacement of $V_{s-}$ by $V_s$  in \eqref{difeq} and the integration w.r.t. $s$  we obtain \eqref{eq11}. For $s=t$ we take $\lim_{s\rightarrow t-}$ in \eqref{eq11}.

\par {\it 3) Proof of \eqref{density} and \eqref{density1}.} We denote by $P_s$ the law of $V_s$. Then from \eqref{eq11} we get that for $0\leq s \leq t$
\begin{equation}\label{fin}
\int_0^s\int _0^{\infty}\left[f'(y)(1+y\,\bar{a}_u) + \frac{1}{2}f''(y) y^2 \,\bar{c}_u+\right.
\end{equation}
$$\left.\int _{\mathbb{R}} \left(f(ye^{-x}) - f(y) - f'(y) y(e^{-x}-1)\right)\bar{K}_u(x) dx\right]P_u(dy)\,du\,=\, \int _0^{\infty}f(y)P_s(dy)$$
\par Moreover, since for $s>0$, the law $P_s$ of $V_s$ has a density $p_s$ w.r.t. Lebesque measure, it gives
\begin{equation}\label{fin1}
\int_0^s\int _0^{\infty}\left[f'(y)(1+y\,\bar{a}_u) + \frac{1}{2}f''(y) y^2 \,\bar{c}_u+\right.
\end{equation}
$$\left.\int _{\mathbb{R}} \left(f(ye^{-x}) - f(y) - f'(y) y(e^{-x}-1)\right)\bar{K}_u(x) dx\right]p_u(y)dy\,du $$
$$=\, \int_0^{\infty}f(y)p_s(y) dy$$
To obtain the equation for the density, we consider  the set of continuously differentiable  functions on compact support $\mathcal{C}^{2}_K\subseteq \mathcal{C}$.
We differentiate the above equation with respect to $s$ to get
\begin{equation}\label{fin2}
\int _0^{\infty}\left[f'(y)(1+y\,\bar{a}_s) + \frac{1}{2}f''(y) y^2 \,\bar{c}_s+\right.
\end{equation}
$$\left.\int _{\mathbb{R}} \left(f(ye^{-x}) - f(y) - f'(y) y(e^{-x}-1)\right)\bar{K}_s(x) dx\right]p_s(y)\,$$
$$=\, \int_0^{\infty}f(y)\,\frac{\partial}{\partial s}p_s(y)\, dy$$
Using the integration by part formula we  deduce that
$$\int _0^{\infty} f'(y)\,p_s(y)dy = -\int _0^{\infty}\frac{\partial}{\partial y}(p_s(y)) \,f(y) dy$$
$$\int _0^{\infty} f'(y)\,y\,p_s(y)dy =  -\int _0^{\infty}\frac{\partial}{\partial y}(y\,p_s(y))\, f(y) dy$$
$$\int _0^{\infty} f''(y)\, y^2 \,p_s(y)dy = \int _0^{\infty}\frac{\partial^2}{\partial y^2}(y^2\,p_s(y)) \,f(y) dy $$
By the change of the variables and by the integration by parts we obtain
$$\int _0^{\infty}\int _{\mathbb{R}} p_s(y)\,\left[f(ye^{-x}) - f(y) - f'(y) y(e^{-x}-1)\right]\bar{K}_s(dx)dy=$$
$$\int _0^{\infty}\left(\int _{\mathbb{R}}[e^{x}p_s(ye^{x}) - p_s(y)+ (e^{-x}-1)\frac{\partial}{\partial y} (yp_s(y)] \,\bar{K}_s(dx) \right)\,f(y)dy$$
The mentioned relations together with the equation \eqref{fin2} gives that for all $f\in \mathcal{C}^2_K$ :
$$\int_0^{\infty} f(y) \left[-\frac{\partial}{\partial s} p_s(y) + \frac{1}{2} \bar{c}_s \frac{\partial}{\partial y}( y^2p_s(y)) - \frac{\partial}{\partial y} (( \bar{a}_sy+1)p_s(y)\right. $$
$$\left.+ \int _{\mathbb{R}} e^xp_s(ye^x)-p_s(y)+(e^{-x}-1)\frac{\partial}{\partial y}(yp_s(y))\bar{K}_s(dx) \right]=0$$
and it proves our claim about the equation for $p_s$.
\par We integrate the equation for $p_s$ on the interval $]0, t-\delta[$ for $\delta>0$ and we pass to the limit as $\delta\rightarrow 0$. Since the laws of $V_{t-}$ and $V_t$ coincide, we get in this way the equation for $p_t$.
$\Box$
\section{Some results about the existence of the smooth density}
 The question of the existence of the density of the law of $V_s$, $0\leq s \leq t,$ of the class
$C^{1,2}(]0,t[, \mathbb{R})$ is rather difficult question, which was open question in all cited works
on exponential functionals. We will give here a partial answer on this question via the known result on Malliavin calculus given in \cite{BGJ}. For the convenience of the readers we present this result here in the one-dimensional case.
\par We consider the following stochastic differential equation:
$$ X^x_t= x + \int_0^t a(X_{s-}^x)\,ds + \int_0^t b(X_{s-}^x)dW_s + \int_0^t\int_{\mathbb{R}} c(X_{s-}^x, z)(\mu-\nu)(ds,dz)$$
where $x\in \mathbb{R}$, $a,b,c$ are measurable functions on $\mathbb{R}$ and $\mathbb{R}^2$ respectively, $W$ is standard Brownian motion, and $\mu$ and $\nu$ are jump measure and its compensator of $X^x$. It is assumed that the solution of this equation exists and is unique, and also that the following assumptions hold.
\par {\it Assumption} (A-r):
\begin{enumerate}
\item[(i)] $a$ and $b$ are $r$-times differentiable with bounded derivatives of all order from 1 to $r$,
\item[(ii)]$c(\cdot, z)$ is $r$-times differentiable and there exists a sigma-finite measure $G$ on $\mathbb{R}$ such that
\begin{enumerate} 
\item $c(0,\cdot)\in \bigcap_{2\leq p<\infty}L^p(\mathbb{R^*}, G)$
\item  for $1\leq n\leq r$, $\sup_{y}(\frac{\partial^n}{\partial y^n}(c(y,\cdot ))\in \bigcap_{2\leq p<\infty}L^p(\mathbb{R^*}, G)$
\end{enumerate}
\end{enumerate}
\par {\it Assumption} (SB-$(\zeta,\theta$)): there exist $\epsilon>0$ and $\delta>0$ such that 
$$b^2(y)\geq \frac{\epsilon}{1+|y|^{\delta}}$$
\par {\it Assumption} (SC-bis) : for all $u\in[0,1]$ there exists $\zeta>0$ such that
$$|1+u\frac{\partial}{\partial y}c(y,z)| >\zeta$$
\par {\bf Theorem 2.29} (cf. \cite{BGJ}, p. 15) Suppose that the assumptions (A-(2r+10)), (SB-$(\zeta,\theta$)) and (SC-bis) are satisfied. Then for $t>0$ the law of  $X^x_t$ has a density $p_t(x,y)$ w.r.t. Lebesgue measure and the map $(t,x,y)\rightarrow p_t(x,y)$ is of class ${\it C}^r(]0,t]\times \mathbb{R}\times \mathbb{R})$.
\par To apply this theorem let us write stochastic differential equation for $(V_s)_{0\leq s\leq t}$. For that we put for $0\leq s\leq t$
$$\left\{\begin{array}{l}
a_s(y) = y(-\bar{b}_s+\frac{1}{2}\bar{c}_s + \int_{\mathbb{R}}(e^{-z}-1+z)\bar{K}_s(dz) +1,\\
b_s(y)= y\sqrt{\bar{c}_s},\\
c_s(y,z)= y(e^{-z}-1).
\end{array}\right.$$
\prop Suppose that 
$$\int_0^t\int_{\mathbb{R}}|e^{-z}-1+z|\bar{K}_s(dz)<\infty$$
and that  $\bar{c_s} >0$ for $0< s\leq t$.
Then the process $(V_s)_{0\leq s\leq t}$ satisfy the following stochastic differential equation:
\begin{equation}\label{sde}
V_s= \int_0^s a_u(V_{u-})du - \int_0^s b_u(V_{u-})d W_u+ \int_0^s \int_{\mathbb{R}}c_u(V_{u-},z)(d\mu _Y- \bar{K}_u(dz)du )
\end{equation}
where $\mu _Y$ is the jump measure of $Y$ and $W$ is DDS Brownian motion corresponding to the continuous martingale part $Y^c$ of $Y$.
\begin{proof} We recall that $V_s$ is defined by \eqref{vi}. Let us introduce the process $\hat{Y}$ via the relation : for $0\leq s\leq t$
\begin{equation}\label{dd}
e^{-Y_s}= \mathcal{E}(\hat{Y})_s
\end{equation}
where $\mathcal{E}(\cdot)$ is Dol\'{e}an-Dade exponential.
Then,
$$V_s= \mathcal{E}(\hat{Y})_s\int_0^s \frac{du}{\mathcal{E}(\hat{Y})_u}$$
and we can see by the integration by part formula that $(V_s)_{0\leq s\leq t}$ is unique strong solution of the equation
\begin{equation}\label{eqvi}
dV_s= V_{s-}d\hat{Y}_s + ds
\end{equation} 
with the initial condition $V_0=0$.
Using the definition of Dol\'{e}an-Dade  exponential we see that \eqref{dd} is equivalent to
$$e^{-Y_s}= e^{\hat{Y}_s-\frac{1}{2}<\hat{Y}^c>}\prod_{0<u\leq s}(1+\Delta\hat{Y}_u)\,e^{-\Delta\hat{Y}_u} $$
where $\hat{Y}^c$ is continuous martingale part of $\hat{Y}$.
From this equality we find that $\hat{Y}^c_s= -Y^c_s$, $\ln(1+\Delta\hat{Y}_s)= - \Delta Y_s$
and that the semi-martingale characteristics $(\hat{B},\hat{C},\hat{\nu})$ of $\hat{Y}$ are:
$$\left\{\begin{array}{l}
\hat{B}_s= -\bar{B}_s+\frac{1}{2}\bar{C}_s + \int_0^s \int_{\mathbb{R}}(e^{-z}-1+z)\bar{K}_u(dz)du\\
\hat{C}_s= \bar{C}_s\\
\hat{\nu}(ds,dz) = (e^{-z}-1)\bar{K}_s(dz) ds
\end{array}\right.$$
Since $(\bar{B},\bar{C},\bar{\nu})$ are absolutely continuous w.r.t. Lebesgue measure with the derivatives $(\bar{b},\bar{c},\bar{K})$ we get that
$$\hat{Y}_s= \int_0^s( -\bar{b}_u+\frac{1}{2}\bar{c}_u +\int_{\mathbb{R}}(e^{-z}-1+z)\bar{K}_u(dz))du- \int_0^s \sqrt{\bar{c}_u} \,dW_u+$$
$$\hspace{3cm} \int_0^s\int_{\mathbb{R}}(e^{-z}-1)(\mu_Y(du,dz)- \bar{K}_u(dz)du)$$
where $W$ is DDS Brownian motion corresponding to the continuous martingale part of $Y$.
Let us put this decomposition into \eqref{eqvi} and we obtain \eqref{sde}. 
\end{proof}\rm
\par  To apply the Theorem 2.29 from \cite{BGJ} we suppose  that $Y$ is a Levy process and we introduce the supplementary process 
$$ V_s^{x} = x+ \mathcal{E}(\hat{Y})_s\int_0^s \frac{du}{\mathcal{E}(\hat{Y})_u}$$
with $x>0$.
We see that $V_s^x-x=V_s$, and $V^0_s=V_s$, and also that the density $p_s(x,y)$ of the law of $V_s^x$ w.r.t. Lebesgue measure and the density $p_s(y)$ of the law of $V_s$ exist or not at the same time and are related: for all $x>0$ and $y>0$
$$p_s(x, y+x)= p_s(y).$$
So, the both densities are the same regularity w.r.t.$(s,y)$.
\prop Suppose that $Y$ is Levy process with the triplet $(b_0,c_0,K_0)$ and the following conditions are satisfied:
\begin{enumerate}
\item $c_0>0$,
\item  $\int_{z<-1}e^{-pz}\,K_0(dz)<\infty$ for $p\geq 2$,
\item there exists a constant $A>0$ such that $K_0(]A,+\infty[)=0$.
\end{enumerate}
Then, for $s>0$, the law of $V_s$ has a density $p_s$  and the map $(s,y)\rightarrow p_s(y)$ is of class $C^{\infty}(]0,t],\mathbb{R}^{+,*})$.
\begin{proof}When $Y$ is Levy process, the functions $a_s, b_s, c_s$ figured in \eqref{sde} are independent on $s$ and are equal to:
$$\left\{\begin{array}{l}
a(y) = y(-b_0+\frac{1}{2}c_0 - \int_{\mathbb{R}}(e^{-z}-1+z)K_0(dz))+1,\\
b(y)= y\sqrt{c_0},\\
c(y,z)= y(e^{-z}-1).
\end{array}\right.$$
We consider the process $V^x$ with $x>0$. We see that the Assumption (A-r) is satisfied for all $r\geq 1$ with $G=K_0$, as well as the Assumption (SB-$(\zeta,\theta))$ putting 
$\epsilon= x^2c_0$ and (SC-bis) taking $\zeta= \frac{1}{2}e^{-A}$. Then the map $(s,x,y)\rightarrow p_s(x,y)$
is of class $C^{\infty}(]0,t],\mathbb{R}^{+,*}\times\mathbb{R}^{+,*})$, and the map $(s,y)\rightarrow p_s(y)$ is of class $C^{\infty}(]0,t],\mathbb{R}^{+,*})$
\end{proof}\rm

\par For non-homogeneous diffusion we obtain the following partial result.
\corr Let $s>0$ be fixed. Suppose that
\begin{enumerate}
\item $\int_0^s\bar{c}_u du >0$,
\item  $\int_0^s\int_{z<-1}e^{-pz}\,K_s(dz)<\infty$ for $p\geq 2$,
\item there exist a constant $A>0$ such that  $K_s(]A,+\infty[)=0$ for   all $0<s\leq t$.
\end{enumerate}
Then, the law of $V_s$  has a density $p_s$  such that the map $y\rightarrow p_s(y)$ is of class $C^{\infty}(\mathbb{R}^{+,*})$.
\begin{proof} We notice that the law of $Y_s$ coincide with the law of Levy process with the triplet
$(\frac{1}{s}\bar{B}_s, \frac{1}{s}\bar{C}_s, \frac{1}{s}\int_0^s\int_{\mathbb{R}}\bar{K}_s(dz) du)$
at the time $s$. Then the previous proposition can be applied and it gives the claim.
\end{proof}
\end{section}

\begin{section}{When $X$ is Levy process}
In this section we consider a particular case of Levy processes. Namely, let $X$ be Levy process with the parameters $(b_0,c_0,K_0)$. 
As before we suppose that
 \begin{equation}\label{condlevy}
\int_{\mathbb{R}}(x^2\wedge 1)K_0(dx) < \infty \,\,\mbox{and}\,\, \int_{|x|>1}\,e^{|x|}K_0(dx) < \infty 
 \end{equation}
 and we put
$$a_0 = -b_0 + \frac{1}{2}c_0+\int_{\mathbb{R}}(e^{-x}-1+x)K_0(dx) .$$
\par  Due to the homogeneity of Levy process, the equation for the density can be simplified as we can see from the following proposition.
\prop\label{p1}
Suppose that \eqref{condlevy} holds and the density of the law of $I_t$ exists and belongs to the class $\mathcal C^{1,2}(]0,t]\times \mathbb{R}^{+,*})$. Then this density verify the following equation:
\begin{equation} \label{density2}
\frac{\partial}{\partial t}p_t(y) = \frac{1}{2} c_0 \frac{\partial ^2}{\partial y^2}(y^2\,p_t(y)) - \frac{\partial}{\partial y}((a_0y+1)\, p_t(y)) +
\end{equation}
$$\hspace{3cm}\int _{\mathbb{R}}\left[e^{x}p_t(ye^{x}) - p_t(y) + (e^{-x}-1) \frac{\partial}{\partial y}(y p_t(y))\right]K_0(dx)
$$
In particular case, when $I_{\infty}<\infty$ ($P$-a.s.) and the density $p_{\infty}$ of the law of $I_{\infty}$ exists and belongs to the class $\mathcal{C}^2(\mathbb{R}^{+,*})$, we have
\begin{equation} \label{density3}
\hspace{-2cm}\frac{1}{2} c_0 \frac{d^2}{dy^2}(y^2\,p_{\infty}(y)) - \frac{d}{dy}((a_0y+1)\, p_{\infty}(y)) +
\end{equation}
$$\hspace{2cm}\int _{\mathbb{R}}\left[e^{x}p_{\infty}(ye^{x}) - p_{\infty}(y) + (e^{-x}-1) \frac{d}{dy}(y p_{\infty}(y))\right]K_0(dx) = 0
$$

\begin{proof} In the case of Lévy processes we write that ($P$-a.s.)
$$V_s= e^{-Y_s}\,J_s = e^{-X_t+X_{(t-s)-}}\,\int_0^s e^{X_t-X_{(t-u)-}}du =$$
$$\int_0^s e^{X_{(t-s)-}-X_{(t-u)-}}du = \int_0^s e^{X_{(t-s)}-X_{(t-u)}}du.$$
Due to the homogeneity of the Lévy processes we have the following identity in law:
$$\mathcal{L}((X_{t-u}-X_{t-s})_{0\leq s\leq t}) = \mathcal{L}((X_{s-u})_{0\leq s\leq t})$$
Then,
$$\mathcal{L}(\int_0^s e^{X_{(t-s)}-X_{(t-u)}}du) = \mathcal{L}(\int_0^s e^{-X_{(s-u)}}du)= \mathcal{L}(\int_0^s e^{-X_{u}}du)$$
where the last equality is obtained by time change. As a conclusion, $\mathcal{L}(V_s)= \mathcal{L}(I_s)$ for $0\leq s\leq t$, and, hence, $(p_s)_{0< s\leq t}$ are  the densities of the laws of 
$(I_s)_{0<s\leq t}$. In addition, again due to the homogeneity, for all $0\leq s \leq t$, $\bar{b}_s= b_{t-s}=b_0$, $\bar{c}_s= c_{t-s}=c_0$, $\bar{K}_s(dx)= K_{t-s}(dx)=K_0(dx)$. Then, from the Theorem 1 we obtain \eqref{density2}. 
\par Again due to the homogeneity, for $0<s\leq t$, the generator $\mathcal{A}^V_s(f) = \mathcal{A}(f)$ where
$$ \hspace{-3cm}\mathcal{A}(f)(y) = (1+y\,a_0)\,f'(y) + $$
$$\frac{1}{2}c_0\,f''(y) \,y^2 +\int _{\mathbb{R}} \left[f(ye^{-x}) - f(y) - f'(y) y(e^{-x}-1)\right]K_0(dx)$$ 
and it does not depend on $s$. Moreover, $\mathcal{L}((V_s)_{0\leq s \leq t})= \mathcal{L}((I_s)_{0\leq s \leq t})$ and the equality \eqref{eq11} become
$${\bf E}f(I_s) =  \int_0^s {\bf E}\mathcal{A}(f)(I_u)du.$$
We divide the both sides of the above equality by $s$ and we let $s$ go to infinity. Since $f$ is bounded, we get zero as a limit on the l.h.s.. Since $I_t\stackrel{P-p.s.}{\longrightarrow}I_{_\infty}$, we also get for $f\in \mathcal{C}$
$$\lim_{s\rightarrow\infty}{\bf E}\mathcal{A}(f)(I_{s})= {\bf E}\mathcal{A}(f)(I_{\infty}).$$
Then,  $  {\bf E}\mathcal{A}(f)(I_{\infty})=0$  and we obtain  \eqref{density3} in the same way as in Theorem 1, by the integration by parts and the time change.
\end{proof}

\rem \rm Similar equation for the density of $I_{\infty}$ in the case when $\int_{\mathbb{R}}(|x|\wedge 1)\,K_0(dx) < \infty$ was obtained in \cite{CPY}. We recall that the condition on $K_0$ of \cite{CPY}  is stronger at zero then our condition.  It should be mentioned that the authors of \cite{CPY}  did not obtained the equation for the density of $I_t$.

\corr \label{c1} Under the conditions of Proposition \ref{p1}, the distribution function $F_t$ of $I_{t}$ verify second order integro-differential equation 
\begin{equation}
\label{eqt}\frac{\partial}{\partial t}F_t(y) = \frac{1}{2} c_0 \frac{\partial}{\partial y}(y^2\, \frac{\partial}{\partial y}F_t(y)) - (a_0y+1)\,  \frac{\partial}{\partial y}F_t(y)+
\end{equation}
$$\hspace{3cm}\int _{\mathbb{R}}\left[F_t(ye^{x}) - F_t(y) + (e^{-x}-1)\,y\,  \frac{\partial}{\partial y}F_t(y))\right]K_0(dx) $$
with following boundary conditions:
$$F_t(0)=0,\,\,\,\lim _{y\rightarrow +\infty}F_t(y)=1.$$
When $I_{\infty}<\infty$, the similar equation is valid for the distribution function $F_{\infty}$
of the law of $I_{\infty}$:
\begin{equation}\label{eqinfty}
\frac{1}{2} c_0 \frac{d}{dy}(y^2\,F'_{\infty}(y)) - (a_0y+1)\, F'_{\infty}(y)+
\end{equation}
$$\int _{\mathbb{R}}\left[F_{\infty}(ye^{x}) - F_{\infty}(y) + (e^{-x}-1)\,y\, F'_{\infty}(y))\right]K_0(dx) =0$$
with similar boundary conditions:
$$F_{\infty}(0)=0,\,\,\,\lim _{y\rightarrow +\infty}F_{\infty}(y)=1.$$
\begin{proof} We integrate each term of the equation of Theorem \ref{t1} on $[0,y]$ and we use the fact that
$$\int_0^y p_{t}(u)du = F_{t}(y)$$
since $F_{t}(0)=0$. We take in account the fact that that the map $(t,u)\rightarrow p_t(u)$ is of class $ C^{1.2}(\mathbb{R}^{+,*}\times \mathbb{R}^{+,*})$ to exchange the integration and the derivation. The same we do for $F_{\infty}(y)$.
\end{proof}
\corr \label{c2} (cf. \cite{D}, \cite{BS}) Let us consider  Brownian motion with drift, i.e.
$$ dX_t= b_0dt +\sqrt{c_0} dW_t$$
where $c_0\neq 0$ and $b_0>0$.
Then the law of exponential functional $I_t$ associated with $X$ has a density which verify :
$$\frac{\partial}{\partial t}p_t(y) = \frac{1}{2} c_0 \frac{\partial ^2}{\partial y^2}(y^2\,p_t(y))- \frac{\partial}{\partial y}((a_0y+1)\, p_t(y))$$
In particular, for $I_{\infty}$ we get that
$$p_{\infty}(x) = \frac{1}{\Gamma (\frac{2\,b_0}{c_0})\,x}\,\left(\frac{2}{c_0 x}\right)^{\frac{2b_0}{c_0}}\,\exp\left(-\frac{2}{c_0x}\right)$$
\begin{proof} From the Proposition \ref{p1} we find the equation for $p_t$. From  Corollary  1  we get the equation for $F_{\infty}$ :
$$\frac{1}{2} c_0 \frac{d}{dy}(y^2\,F'_{\infty}(y)) - (a_0y+1)\, F'_{\infty}(y)=0$$
This equation is equivalent to
$$\frac{1}{2} c_0 y^2\,F''_{\infty}(y) - ((a_0 -c_0)y+1)\, F'_{\infty}(y)=0$$
By the reduction of the order of the equation, we find that
$$F'_{\infty}(y)= C \,y^{2(\frac{a_0}{c_0}-1)}\,\exp\left(-\frac{2}{c_0y}\right)$$
with some positive constant $C$.
Using boundary conditions we calculate a constant $C$. We get that $C= \frac{1}{\Gamma\left(1-\frac{2a_0}{c_0}\right)}\left(\frac{c_0}{2}\right)^{2\frac{a_0}{c_0}-1}$
where $\Gamma (\cdot )$ is gamma function. Since $1-\frac{2a_0}{c_0}=\frac{2b_0}{c_0}$, this gives us the final result. 
\end{proof}

\rem \rm We recall that the distribution of $I_t$ for Brownian motion with drift was obtained in \cite{D} and in \cite{BS}, formula 1.10.4, p. 264.
\par Let us denote by $\nu^+$ and $\nu^-$ the Levy measure of positive and negative jumps respectively, namely for $x>0$
$$\nu^+([x,+\infty[)=\int _x^{+\infty}K_0(du),\,\,\,\nu^-(]-\infty, -x])=\int _{-\infty}^{-x}K_0(du)$$
To simplify the notations we put also $$\nu^+(x)=\nu^+([x,+\infty[),\,\nu^-(x)= \nu^-(]-\infty ,-x])$$
Let us suppose in addition that
$$\int_{\mathbb{R}}|x|\,K_0(dx)<\infty .$$
\corr \label{c3} Suppose that $X$ is a Levy process  with integrable jumps.
Then, under the conditions of Proposition \ref{p1}, the density $p_{t}$ of $I_{t}$, verify~:
$$\frac{\partial}{\partial t}p_t(y) =\frac{1}{2}c_0 \frac{\partial ^2}{\partial y^2}(y^2\,p_{t}(y)) - \frac{\partial}{\partial y}((r_0y+1)\, p_{t}(y))+$$
$$\int _y^{+\infty}p_{t}(z)\nu ^+ (\ln(\frac{z}{y}))\,dz +\int_0^y p_{t}(z) 
\nu ^- (-\ln(\frac{z}{y}))\,dz$$
where $r_0= a_0-\int_{\mathbb{R}}(e^{-x}-1)\,K_0(dx)= -b_0+\frac{1}{2}c_0+\int_{\mathbb{R}}x\,K_0(dx)$.\\
\begin{proof} We take the equation \eqref{eqt} and we rewrite it in the following form:
$$ \frac{\partial}{\partial t}F_t(y) = \frac{1}{2} c_0 \frac{\partial}{\partial y}(y^2\, \frac{\partial}{\partial y}F_t(y)) - (r_0y+1)\,  \frac{\partial}{\partial y}F_t(y)+
$$
$$\hspace{3cm}\int _{\mathbb{R}}\left[F_t(ye^{x}) - F_t(y)\,\right]K_0(dx) $$
Then we divide the integral over $\mathbb{R}$ in two parts  integrating on $]0,+\infty[$ and $]-\infty , 0[$. We do the integration by parts :
$$\int_{\mathbb{R}}[F_{t}(ye^{x}) - F_{t}(y)]K_0(dx) =$$
$$\int_0^{+\infty}\frac{\partial}{\partial x}F_{t}(ye^{x})\,ye^{x}\,\nu^+(x)dx +\int_{-\infty}^0 \frac{\partial}{\partial x}F_{t}\,(ye^{x})\,ye^{x}\,\nu^-(x)dx             $$
 and we change the variables $z= ye^x$.  We differentiate the result w.r.t. $t$ and this gives us the claim.
\end{proof}
\corr \label{c4} Suppose that for $x\in \mathbb{R}$
$$K_0(x)= e^{-\mu x}I_{\{x>0\}}$$
Then, under the assumptions of Proposition \ref{p1}, the density $p_{t}$ of $I_{t}$, verify :
$$\frac{\partial}{\partial t}p_t(y) = \frac{1}{2}c_0 \frac{\partial ^2}{\partial y^2}(y^2 \,p_{t}(y))-\frac{\partial}{\partial y}((r_0y+1)\, p_{t}(y)) + \frac{y^{\mu}}{\mu}\int_y^{\infty}\frac{p_t(z)}{z^{\mu}}\,dz$$
In particular, for the density $p_{\infty}$ of $I_{\infty}$   we have :
$$ \frac{1}{2}c_0 \frac{\partial ^2}{\partial y^2}(y^2 \,p_{t}(y))-\frac{\partial}{\partial y}((r_0y+1)\, p_{t}(y)) + \frac{y^{\mu}}{\mu}\int_y^{\infty}\frac{p_t(z)}{z^{\mu}}\,dz=0$$
\begin{proof}
We take in account that $\nu^+(x)= \frac{1}{\mu}e^{-\mu x}$ and $\nu^-(x)=0$ for all $x>0$ and this gives us the equation for $p_t$ in this particular case.
\end{proof}
\end{section}

\end{document}